\newcommand{\Q}{\mathbb{Q}}
\newcommand{\Z}{\mathbb{Z}}
\newcommand{\C}{\mathbb{C}}
\newcommand{\F}{\mathbb{F}}
\newcommand{\zzeta}{\overline{\zeta_3}}
\DeclareMathOperator{\car}{char}
\DeclareMathOperator{\tr}{tr}
\DeclareMathOperator{\Frob}{Frob}
\DeclareMathOperator{\Gal}{Gal}
\DeclareMathOperator{\SL}{SL}
\DeclareMathOperator{\GL}{GL}
\DeclareMathOperator{\Aut}{Aut}
\newcommand{\mat}[4]{\left(\begin{array}{cc}
#1 &#2\\
#3 &#4
\end{array}\right)}
\begin{document}

\markboth{Nirvana Coppola}{Wild Galois representations: $2$-adic fields}

%%%%%%%%%%%%%%%%%%%%% Publisher's Area please ignore %%%%%%%%%%%%%%%
%
\catchline{}{}{}{}{}
%
%%%%%%%%%%%%%%%%%%%%%%%%%%%%%%%%%%%%%%%%%%%%%%%%%%%%%%%%%%%%%%%%%%%%

\title{Wild Galois representations: elliptic curves over a $2$-adic field with non-abelian inertia action}

\author{Nirvana Coppola\footnote{
University of Bristol}}

\address{School of Mathematics, University of Bristol, Fry Building,\\
University Walk, Bristol, BS8 1UG, United Kingdom\\
\email{nc17051@bristol.ac.uk} }

\maketitle

\begin{history}
\received{}
\accepted{}
\end{history}

\begin{abstract}
In this paper we present a description of the $\ell$-adic Galois representation attached to an elliptic curve defined over a $2$-adic field $K$, in the case where the image of inertia is non-abelian. There are two possibilities for the image of inertia, namely $Q_8$ and $\SL_2(\F_3)$, and in each case we need to distinguish whether the inertia degree of $K$ over $\Q_2$ is even or odd. The result presented here are being implemented in an algorithm to compute explicitly the Galois representation in these four cases.
\end{abstract}

\keywords{Elliptic curves, local fields, wild ramification, Galois representations}

\ccode{Mathematics Subject Classification 2010: 11G07, 11F80}

\begin{section}{Introduction}
Let $K$ be a $2$-adic field (i.e. a finite extension of $\Q_2$ or, equivalently, a non-archimedean local field with characteristic $0$ and residue characteristic $2$) and let $E/K$ be an elliptic curve. Since $\car (K)=0$ we can always assume that $E$ is in short Weierstrass form, $E: y^2=x^3+a_4x+a_6$, for $a_4,a_6 \in K$. Let $k$ be the residue field of $K$ and let $n=[k:\F_2]$, i.e. $n$ is the inertia degree of $K$. Suppose that $E/K$ has potential good reduction, that is it has additive reduction and there exists a finite extension of $K$ where $E$ acquires good reduction.

Let $\overline{K}$ be a fixed algebraic closure of $K$ and let $G_K=\Gal(\overline{K}/K)$ be the absolute Galois group of $K$, which acts on the points of $E(\overline{K})$. This induces a representation $\rho$ on the $\ell$-adic Tate module $T_\ell (E)$, which is independent of the prime $\ell$ as long as $\ell \neq 2$, in the sense of \cite[\S 2 Theorem 2.ii]{10.2307/1970722}.

More precisely we will denote by $\rho_\ell$ or simply $\rho$ the following representation:
\begin{center}
    \begin{tikzcd}
    G_K \arrow{r}& Aut(T_\ell (E) \otimes \overline{\Q}_\ell),
    \end{tikzcd}
\end{center}
which is a $2$-dimensional representation over $\overline{\Q}_\ell$, so after fixing a basis for $T_\ell (E) \otimes \overline{\Q}_\ell$ we can identify $Aut(T_\ell (E) \otimes \overline{\Q}_\ell)$ with $\GL_2(\overline{\Q}_\ell)$. Let us consider the restriction of $\rho$ to the inertia subgroup $I_K \cong \Gal(\overline{K} / K^{nr})$ of $G_K$ (here $K^{nr}$ is the maximal unramified extension of $K$). If $L$ is the minimal extension of $K^{nr}$ where $E$ acquires good reduction, which exists by \cite[\S 2 Corollary 3]{10.2307/1970722}, then $\ker (\rho)= \Gal(\overline{K}/L)$ and the image of inertia is isomorphic to $\Gal(L/K^{nr})$, as a consequence of the Criterion of N\'{e}ron-Ogg-Shafarevich (see \cite[VII \S 7 Theorem 7.1]{silverman1986arithmetic}).  Moreover, it is proved in \cite[Theorems 2,3]{Kraus1990} that the image of inertia can only be one of the following:
\begin{align}
    C_2, C_3, C_4, C_6, Q_8, \SL_2(\F_3).
\end{align}

For a more explicit approach, see also \cite[Part IV \S 11,12]{krausfreitas2016}.
In this paper we focus on the cases where $I$ is non-abelian (equivalently non-cyclic), hence it is either $Q_8$ or $\SL_2(\F_3)$. In \cite[Theorem 3]{Kraus1990}, there is a criterion to check whether this holds.

Recall that the quotient $G_K/I_K$ is isomorphic to the absolute Galois group of the residue field, which is pro-cyclic and generated by the Frobenius element, that acts as $x \mapsto x^q$ where $q=|k|$. We call an Arithmetic Frobenius of $K$, and denote it by $\Frob_K$, any fixed choice of an element of $G_K$ that reduces to the Frobenius element modulo $I_K$. In order to compute explicitly the elements in the image of $\rho$, let us fix an embedding $\overline{\Q}_\ell \rightarrow \C$; in particular we will identify the element $\sqrt{-2}$ of $\overline{\Q}_\ell$ with $i \sqrt{2} \in \C$.

We will prove the following result. We refer to \cite{groupnames} for the notation used for group names and character tables; in particular we denote each conjugacy class by the order of its elements, followed by a letter if there is more than one class with the same order.
\begin{theorem}\label{mainthm}
Let $E/K$ be an elliptic curve with potential good reduction over a $2$-adic field, let $\ell$ be a prime different from $2$ and let $\rho: G_K \rightarrow \GL_2(\overline{\Q}_\ell)$ be the $\ell$-adic Galois representation attached to $E$. Suppose that $I= \rho(I_K)$ is non-abelian. Let $\Delta$ be the discriminant of a (not necessarily minimal) equation for $E$ and let $n$ be the inertia degree of $K/\Q_2$. Then $\rho$ factors as
\begin{align}
    \rho = \chi \otimes \psi
\end{align}
where $\chi : G_K \rightarrow \overline{\Q}_\ell^\times$ is the unramified character mapping the (Arithmetic) Frobenius of $K$ to $\sqrt{-2}^n$, and $\psi$ is the irreducible $2$-dimensional representation of the group $G=\Gal(K(E[3])/K)$ given as follows.
\begin{itemize}
    \item If $n$ is even and $\Delta$ is a cube in $K$, then $\psi$ is the representation of $G=Q_8$ with character
$$
\begin{array}{c|rrrrr}
  \rm class&\rm1&\rm2&\rm4A&\rm4B&\rm4C\cr
  \rm size&1&1&2&2&2\cr
\hline
  \psi&2&-2&0&0&0\cr
\end{array}
$$
    \item If $n$ is even and $\Delta$ is not a cube in $K$, then $\psi$ is the representation of $G=\SL_2(\F_3)$ with character
$$
\begin{array}{c|rrrrrrr}
  \rm class&\rm1&\rm2&\rm3A&\rm3B&\rm4&\rm6A&\rm6B\cr
  \rm size&1&1&4&4&6&4&4\cr
\hline
  \psi&2&-2&-1&-1&0&1&1\cr
\end{array}
$$
Moreover the image of inertia is $Q_8$ if $\Delta$ is a cube in $K^{nr}$ and $\SL_2(\F_3)$ otherwise.
    \item If $n$ is odd and $\Delta$ is a cube in $K$ (equivalently the image of inertia is $Q_8$), then $\psi$ is the representation of $G=SD_{16}$ with character
$$
\begin{array}{c|rrrrrrr}
  \rm class&\rm1&\rm2A&\rm2B&\rm4A&\rm4B&\rm8A&\rm8B\cr
  \rm size&1&1&4&2&4&2&2\cr
\hline
  \psi&2&-2&0&0&0&\sqrt{-2}&-\sqrt{-2}\cr
\end{array}
$$
    \item If $n$ is odd and $\Delta$ is not a cube in $K$ (equivalently the image of inertia is $\SL_2(\F_3)$), then $\psi$ is the representation of $G=\GL_2(\F_3)$ with character
$$
\begin{array}{c|rrrrrrrr}
  \rm class&\rm1&\rm2A&\rm2B&\rm3&\rm4&\rm6&\rm8A&\rm8B\cr
  \rm size&1&1&12&8&6&8&6&6\cr
\hline
  \psi&2&-2&0&-1&0&1&\sqrt{-2}&-\sqrt{-2}\cr
\end{array}
$$
\end{itemize}

In the last two cases a generator for the class $8A$ can be described explicitly (it is $\phi \sigma$ in the proof of Theorem \ref{nodd}).
\end{theorem}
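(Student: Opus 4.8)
The plan is to realize $\rho$ as an unramified twist of a representation of the finite group $G=\Gal(K(E[3])/K)$, to pin down the twist by Weil-number considerations, and then to identify $G$ and the resulting $2$-dimensional representation case by case. Since $E$ has potential good reduction, $\rho|_{I_K}$ has finite image $I\in\{Q_8,\SL_2(\F_3)\}$; geometrically this inertia acts through the automorphism group of the supersingular reduction, and in residue characteristic $2$ one has $\Aut(\tilde E)\cong\SL_2(\F_3)$, which acts faithfully on $\tilde E[3]$. Consequently the reduction map $I\hookrightarrow\GL_2(\F_3)$ is injective, $E$ acquires good reduction over $K(E[3])$, and by the Criterion of N\'eron--Ogg--Shafarevich $\rho$ is unramified on $G_{K(E[3])}$. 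I would first record that all ramification of $\rho$ is therefore concentrated in the finite quotient $G$, so that the only remaining freedom is the unramified direction generated by an Arithmetic Frobenius $\Frob_K$.

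To determine the twisting character $\chi$, I would reduce $E$ over a finite residue field and use that the eigenvalues of $\rho(\Frob_K)$ are Weil numbers of absolute value $q^{1/2}=2^{n/2}$. The key input is that a supersingular elliptic curve over $\F_2$ has Frobenius $\pi$ satisfying $\pi^2=-2$; passing to the degree-$n$ residue field of $K$ shows that the eigenvalues of $\rho(\Frob_K)$ equal $\sqrt{-2}^{\,n}$ up to roots of unity. This forces $\chi$ to be the unramified character with $\chi(\Frob_K)=\sqrt{-2}^{\,n}$, as it is the unique unramified twist rendering the eigenvalues of $\psi(\Frob_K)=\chi(\Frob_K)^{-1}\rho(\Frob_K)$ roots of unity. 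A clean consistency check comes from the determinant: $\det\rho=\chi_\ell$ gives $\det\rho(\Frob_K)=q$, hence $\det\psi(\Frob_K)=q\cdot\sqrt{-2}^{\,-2n}=(-1)^n$. I would then verify that $\psi=\chi^{-1}\otimes\rho$ is not merely of finite order on $\Frob_K$ but trivial on all of $G_{K(E[3])}$, so that it descends to a genuine representation of $G$; since $\psi|_{I_K}$ is a faithful $2$-dimensional irreducible of $Q_8$ or $\SL_2(\F_3)$, the representation $\psi$ is itself irreducible.

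Next I would identify $G$ as an explicit subgroup of $\GL_2(\F_3)$ using two independent invariants. The parity of $n$ controls $\det\psi(\Frob_K)=(-1)^n$: for $n$ even $G\subseteq\SL_2(\F_3)$, whereas for $n$ odd the class of $\Frob_K$ has determinant $-1$ and $G$ must be one of the overgroups $SD_{16}$ (the Sylow $2$-subgroup) or $\GL_2(\F_3)$ itself. The cube behaviour of $\Delta$ controls the cyclic $C_3$ sitting above $Q_8$ inside $\SL_2(\F_3)\cong Q_8\rtimes C_3$: invoking Kraus's criterion I would translate ``$I$ equals $Q_8$ versus $\SL_2(\F_3)$'' into ``$\Delta$ is, respectively, a cube or not a cube in $K^{nr}$'', while the finer distinction between $\Delta$ being a cube in $K$ or only in $K^{nr}$ decides whether this $C_3$ is ramified or comes from the residue extension. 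Combining the two invariants yields exactly the four groups $Q_8,\ \SL_2(\F_3),\ SD_{16},\ \GL_2(\F_3)$ in the four listed cases. In each case the faithful $2$-dimensional irreducible of $G$ with determinant $(-1)^n$ and prescribed restriction to $I$ is unique, so the character of $\psi$ is determined, and its values on the inertia classes are read directly from the $Q_8$ and $\SL_2(\F_3)$ tables.

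The remaining and most delicate point, treated in the proof of Theorem \ref{nodd}, is the evaluation of $\psi$ on the order-$8$ classes in the two $n$-odd cases, where the character takes the values $\pm\sqrt{-2}$. Here I would exhibit the explicit generator $\phi\sigma$ of the class $8A$, with $\sigma$ a generator of tame inertia and $\phi$ a Frobenius lift, check directly that it has order $8$ in $G$, and compute its trace from the product of the normalization $\sqrt{-2}^{\,n}$ with the eighth-root-of-unity eigenvalues of $\psi(\Frob_K)$; in the model $r\mapsto\mathrm{diag}(\zeta_8,\zeta_8^3)$ of the faithful $2$-dimensional irreducible this yields $\zeta_8+\zeta_8^3=\sqrt{-2}$, with the conjugate class $8B$ giving $-\sqrt{-2}$. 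I expect this to be the main obstacle: one must control the exact phase, including the sign of $\sqrt{-2}$ under the fixed embedding $\overline{\Q}_\ell\hookrightarrow\C$, and must confirm that the group generated by inertia together with this Frobenius element is precisely $SD_{16}$ (respectively $\GL_2(\F_3)$) rather than a proper subgroup or a different subgroup of $\GL_2(\F_3)$.
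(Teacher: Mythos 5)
Your overall framework---factoring $\rho=\chi\otimes\psi$, pinning down $\chi$ by the supersingular Weil number $\pi^2=-2$, and sorting the four cases by the parity of $n$ and the cube class of $\Delta$---matches the paper's structure and is sound (the paper gets the factorization from \cite[\S 2 Lemma 1]{dokchitser2008root} and the value $\sqrt{-2}^n$ from point-counting on the reduced curve $y^2+y=x^3$, which is your Weil-number argument). But there is a genuine gap exactly at the step you yourself flag as the main obstacle, and your proposed resolution of it is circular. For $n$ odd, $SD_{16}$ (resp.\ $\GL_2(\F_3)$) has \emph{two} faithful irreducible $2$-dimensional representations extending the given representation of inertia; they differ only by swapping the values $\pm\sqrt{-2}$ on the classes $8A$ and $8B$. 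Declaring that $\phi\sigma$ acts as $\mathrm{diag}(\zeta_8,\zeta_8^3)$ ``in the model'' is precisely \emph{choosing} one of these two representations, not \emph{proving} that the Galois representation of an elliptic curve realizes that choice: you could equally well have written $\mathrm{diag}(\zeta_8^5,\zeta_8^7)$, giving trace $-\sqrt{-2}$, and nothing in your argument rules this out. The content of the theorem is an arithmetic statement about which sign actually occurs, relative to the fixed embedding $\sqrt{-2}\mapsto i\sqrt2$ and the fixed normalization of $\chi$, and no amount of abstract character theory can decide it.

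The paper's proof supplies the missing arithmetic input by working modulo $3$: taking $\ell=3$, it computes the mod-$3$ matrices of the relevant elements explicitly from the $3$-torsion of the reduced curve $y^2+y=x^3$. A key lemma (Lemma \ref{frobmod3}) shows there is a basis $\{P,Q\}$ of $E[3]$ in which $\overline{\rho}(\Frob_K)=\mat{1}{0}{0}{2}$ (this uses that $F/K$ is non-Galois for $n$ odd, so only $\pm P$ among the order-$3$ points are $F$-rational); choosing $\sigma$ of order $4$ in inertia with $\overline{\rho}(\sigma)=\mat{2}{1}{1}{1}$, one gets $\overline{\rho}(\Frob_K\sigma)=\mat{2}{1}{2}{2}$, of trace $1$. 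Hence $\tr\rho(\Frob_K\sigma)\equiv 1 \pmod 3$, while $\tr\rho(\Frob_K\sigma)=\sqrt{-2}^{\,n}\cdot(\pm\sqrt{-2})=\pm(-2)^{(n+1)/2}$; only the $+$ sign is $\equiv 1 \pmod 3$. This congruence is what breaks the symmetry between $8A$ and $8B$, and it has no counterpart in your proposal. (Two smaller points: your $\sigma$ of order $4$ lies in $Q_8$, the \emph{wild} part of inertia, not in tame inertia as you state; and your verification that $\psi$ is trivial on $G_{K(E[3])}$---not merely unramified---needs the centrality/scalarity of $\rho(\Frob_F)$, which the paper establishes before splitting into cases.)
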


This theorem is almost completely proved in \cite[\S 5]{dokchitser2008root}. In particular the cases where $n$ is even are already known, and here we present a proof for completeness. The cases where $n$ is odd are more subtle. Although it can be easily proved that the representation $\psi$ can only be either the one described above or the one which has the same character values for every conjugacy class except for the classes $8A$ and $8B$, which are swapped, it is not trivial to identify which of these two is equal to $\psi$. In this work we prove that, with the definition of $\chi$ made in the statement of Theorem \ref{mainthm}, only one of the two possible cases occur for elliptic curves. The method of proof consists of describing explicitly a generator of the class $8A$ and computing the trace of $\psi$ on it.

%Note finally that most of the tools presented here coincide with those used in \cite{krausfreitas2016}, but here we apply these tools to the case of any $2$-adic field.

\end{section}

\begin{section}{The good model}\label{sec:good model}

In the following, $E$ is an elliptic curve over a $2$-adic field $K$, with potential good reduction, such that the Galois action attached to it has non-abelian inertia image $I$.

\begin{lemma}\label{good model}
Let $F$ be the field obtained from $K$ by adjoining the coordinates of one point of exact order $3$ and a cube root of the discriminant $\Delta$ of $E$. Then $E$ acquires good reduction over $F$ and it reduces to $\tilde E_F : y^2+y=x^3$ on the residue field.
\end{lemma}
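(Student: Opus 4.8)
The plan is to deduce good reduction from the criterion of N\'eron--Ogg--Shafarevich and then to pin down the special fibre through its automorphism group. Since $3$ is prime to the residue characteristic, $E$ acquires good reduction over an extension $F/K$ precisely when the inertia subgroup of $F$ acts trivially on $E[3]$. It is therefore enough to argue over $K^{nr}$, where $E$ attains good reduction exactly over $L=K^{nr}(E[3])$ with $\Gal(L/K^{nr})\cong I$; the statement then reduces to the field-theoretic identity $K^{nr}(P,\sqrt[3]{\Delta})=K^{nr}(E[3])$, with $P$ a point of exact order $3$. I would first note that $\zeta_3\in K^{nr}$, so the mod-$3$ cyclotomic character $\det\bar\rho_3$ is trivial on inertia and $I$ lands in $\SL_2(\F_3)$.

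Next I would compute stabilizers inside $\SL_2(\F_3)$, together with the classical description of $K^{nr}(\sqrt[3]{\Delta})$ as the subfield of $K^{nr}(E[3])$ cut out by the $C_3$-quotient $\SL_2(\F_3)\twoheadrightarrow\SL_2(\F_3)/Q_8$ (so that $K^{nr}(\sqrt[3]{\Delta})=K^{nr}$ exactly when $I=Q_8$, which has no quotient of order $3$). If $I=Q_8$, the stabilizer of the nonzero vector $P\in E[3]$ is trivial: no nonidentity element of $Q_8$ has eigenvalue $1$, since $-1$ acts as $-\mathrm{id}$ and the order-$4$ elements have eigenvalues $\pm\sqrt{-1}\notin\F_3$; hence already $K^{nr}(P)=K^{nr}(E[3])$, while $\sqrt[3]{\Delta}$ contributes nothing. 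If $I=\SL_2(\F_3)$, the stabilizer of $P$ is the order-$3$ subgroup generated by a transvection, so $[K^{nr}(E[3]):K^{nr}(P)]=3$; since this transvection has order $3$ it does not lie in the index-$3$ subgroup $Q_8$, whence $\mathrm{Stab}(P)\cap Q_8=1$ and the subgroup fixing both $P$ and $\sqrt[3]{\Delta}$ is trivial. Either way $K^{nr}(P,\sqrt[3]{\Delta})=K^{nr}(E[3])$, giving good reduction over $F$.

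Finally I would identify the reduction. Over $F$ the special fibre $\tilde E$ is an elliptic curve in characteristic $2$, and $I$ acts faithfully on it by automorphisms fixing the origin. As $I$ is non-cyclic, and the only elliptic curve in characteristic $2$ with non-cyclic automorphism group is the supersingular curve with $j=0$, whose automorphism group is $\SL_2(\F_3)$ of order $24$, the reduction must be this curve, whose standard model is $\tilde E_F:y^2+y=x^3$. The step I expect to be the main obstacle is the identification of $K^{nr}(\sqrt[3]{\Delta})$ with the $Q_8$-fixed field, i.e.\ that the cube root of the discriminant realises exactly the $C_3$-quotient of $\SL_2(\F_3)$ rather than some other cubic subextension of $K^{nr}(E[3])$; I would settle this by working out the Galois action on the roots of the $3$-division polynomial, or alternatively by exhibiting an explicit integral model of $E$ over $F$ — built directly from $P$ and $\sqrt[3]{\Delta}$ — and reducing it to $y^2+y=x^3$.
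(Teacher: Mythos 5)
Your route --- N\'eron--Ogg--Shafarevich plus a stabilizer computation in $\SL_2(\F_3)$ --- is genuinely different from the paper's proof, which instead constructs an explicit good model (tangent-line coordinates at $P$ give $y^2+Axy+By=x^3$; one shows $B$ is a cube in $F$ using $v_F(j)>0$ and Hensel's lemma, then rescales to $y^2+A'xy+y=x^3$ with $v_F(A')>0$ and unit discriminant). Your group theory is correct, but the step you yourself flag as the main obstacle is a genuine, load-bearing gap: when $I=\SL_2(\F_3)$ you need to know both that $\sqrt[3]{\Delta}\in K^{nr}(E[3])$ and that $\Delta$ is \emph{not} a cube in $K^{nr}$ (i.e.\ that $\sqrt[3]{\Delta}$ cuts out exactly the $Q_8$-fixed cubic subextension). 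If $\Delta$ were a cube in $K^{nr}$, then $F^{nr}=K^{nr}(P)$ would be the fixed field of the order-$3$ stabilizer of $P$, inertia over $F$ would still act nontrivially on $E[3]$, and good reduction over $F$ would fail. Note also that one of your two proposed repairs --- exhibiting an explicit integral model built from $P$ and $\sqrt[3]{\Delta}$ --- essentially \emph{is} the paper's proof, so falling back on it concedes the alternative route.

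The more serious gap is one you do not flag: the second assertion of Lemma \ref{good model}. Faithfulness of the non-cyclic inertia action by automorphisms of the special fibre only yields $j(\tilde E)=0$, which determines the reduction up to $\overline{\F}_2$-isomorphism, i.e.\ up to twist. The lemma asserts the specific model $y^2+y=x^3$ over the residue field $k_F$, and in characteristic $2$ there are several non-isomorphic $k_F$-forms with $j=0$: over $\F_2$ already one has $y^2+y=x^3+x$ (five rational points, trace $-2$) and $y^2+y=x^3+x+1$ (one rational point, trace $2$) alongside $y^2+y=x^3$ (three points, trace $0$). This distinction is precisely what the rest of the paper relies on: Lemma \ref{eigenvalues} computes $\tr\rho(\Frob_F)=0$ by counting points on $y^2+y=x^3$, giving the characteristic polynomial $T^2+2$ and ultimately the character values $\pm\sqrt{-2}$ in Theorem \ref{mainthm}; a different twist would produce different values. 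A purely group-theoretic identification of the special fibre therefore cannot prove the lemma as stated; pinning down the twist is exactly what the paper's explicit-model computation accomplishes.
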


\begin{proof}
Let $P=(x_P,y_P)$ be a non-trivial $3$-torsion point with coordinates in $F$ and let $\lambda_P$ be the slope of the tangent line at $P$. Then after applying the change of coordinates
\begin{align}
    \left\lbrace
    \begin{array}{rl}
         x& \mapsto x+x_P  \\
         y& \mapsto y +\lambda_P x + y_P 
    \end{array}
    \right.
\end{align}
we get an equation for $E$ over $F$ with the same discriminant $\Delta$, of the form
\begin{align}\label{auxiliary equation}
y^2 + A xy + B y = x^3,
\end{align}
with $B \neq 0$ (for a detailed computation see \cite[\S 2, Proposition 2.22 and Corollary 2.23]{Robson2017}).

Next we prove that $\sqrt[3]{B} \in F$. Note that the discriminant of equation \eqref{auxiliary equation} is given by
\begin{align}
    \Delta = -27 B^4 + (AB)^3;
\end{align}
since $\Delta$ and $-B^3$ are cubes in $F$, we have that also $27B-A^3 = 27B \big( 1 - \frac{A^3}{27B} \big)$ is a cube in $F$. If we show that the quantity $1 - \frac{A^3}{27B} $ is a cube in $F$, then also $B$ is. To prove this claim, it is sufficient to show that the valuation in $F$ of $\frac{A^3}{27B}$ is strictly positive. Then we conclude using Hensel's Lemma that the polynomial $z^3 - \big( 1 - \frac{A^3}{27B} \big)$ has a root in $F$.

We know by \cite[\S2 Corollary 2]{10.2307/1970722} that $E$ acquires good reduction over the field $K(E[3])$. We write $v'$ for the normalized valuation on this field, and $v_F$ for the normalized valuation on $F$. As shown in the proof of Theorem 2 in \cite{10.2307/1970722}, the image of inertia under the Galois action injects into $\Aut(\tilde E_{K(E[3])})$, therefore by the classification of the automorphisms of an elliptic curve over a field of characteristic $2$ (see \cite[III \S 10, Theorem 10.1]{silverman1986arithmetic}) it can be non-abelian only if $v'(j) >0$, where $j$ is the $j$-invariant of the curve, and therefore we have $v_F(j) >0$.

Assume by contradiction that $v_F\big(\frac{A^3}{27B}\big) \leq 0$, or equivalently $3v_F(A) \leq v(B)$. By direct computation,
\begin{align}
    j = \dfrac{A^3 (A^3-24B)^3}{B^3 (A^3-27B)}
\end{align}
so we have that the valuation of the numerator is $12 v_F(A)$, and the valuation of the denominator is at least $3v_F(B) + 3v_F(A)$. Now
\begin{align}
    v_F(j) \leq 12 v_F(A) - 3(v_F(B)+v_F(A)) = 3 (3 v_F(A) - v_F(B)) \leq 0,
\end{align}
contradicting the fact that $v_F(j) >0$.

Therefore $B$ is a cube in $F$ and the following is a well-defined change of variables over the field $F$.
\begin{align}
    \left\lbrace
    \begin{array}{ll}
        x &\mapsto \big(\sqrt[3]{B})^2 x \\
        y &\mapsto \big(\sqrt[3]{B})^3 y
    \end{array}
    \right.
\end{align}

After applying this transformation to the curve \eqref{auxiliary equation}, we get the model $y^2 + A' xy + y =x^3$, with $A'=A/\sqrt[3]{B}$. By the computation above, $v_F(A) > v_F(B)/3$, so $v_F(A')>0$ and the valuation of the discriminant is $v_F(-27 B^4 + (AB)^3) - 12 v_F(\sqrt[3]{B})=0$. Therefore this model reduces to $y^2+y=x^3$ on the residue field of $F$, and in particular $E$ acquires good reduction over $F$.
\end{proof}

Computationally it is possible to find the values $x_P,y_P,\lambda_P$ using the following modified version of the $3$-division polynomial, whose roots are precisely the slopes of all tangent lines at the non-trivial $3$-torsion points (for a proof, see \cite[Theorem 1]{dokchitser2007}):
\begin{align}
    \gamma(t)=t^8+18a_4t^4+108a_6t^2-27a_4^2.
\end{align}

If $\lambda_P$ is a root of $\gamma$, then the corresponding point $P$ has coordinates $x_P=\frac{\lambda_P^2}{3}$, $y_P=\frac{\lambda_P^4+3a_4}{6\lambda_P}$.

Let $F^{nr}$ be the maximal unramified extension of $F$, which is equal to the compositum of $F$ and $K^{nr}$. Note that $F^{nr}$ is the minimal extension of $K^{nr}$ where the curve $E$ acquires good reduction. Indeed if $L$ is such extension then by \cite[\S 2 Corollary $2$]{10.2307/1970722}, we have that $L=K^{nr}(E[3])$ and so it clearly contains the coordinates of any $3$-torsion point and any cube root of $\Delta$, which by an easy computation can be expressed in terms of these coordinates, so $F^{nr} \subseteq L$ (see \cite[\S 2 Lemma 2.20]{Robson2017}). On the other hand E does acquire good reduction over $F$, hence on $F^{nr}$, so $L=F^{nr}$ by minimality. Also note that $\ker (\rho)=\Gal(\overline{K}/L)$ and so the representation factors through $\Gal(L/K)$ and the representation induced here is injective.

%\begin{center}
%    \begin{tikzcd}[column sep=large, cells= {nodes={draw, rounded rectangle}}]
%        & \overline{K} \arrow[dash]{d}&  \\
%        & L=F^{nr} \arrow[dash]{dl} \arrow[dash]{dr}[description]{\Gal(L/K^{nr}) \cong \rho (I_K)}&   \\
%    F \arrow[dash]{dr}  &   & K^{nr} \arrow[dash]{dl}\\
%        & K &   
%    \end{tikzcd}
%\end{center}

We have that $[L:K^{nr}] \mid [F:K]$, and since we are assuming that $I$ is non-abelian then $[L:K^{nr}]$ is either $8$ or $24$, so $8 \mid [F:K]$. This occurs precisely when the extension given by adjoining the coordinates of $P$ is totally ramified of degree $8$, i.e. when the polynomial $\gamma$ defined above is irreducible over $K^{nr}$.

There are several cases to consider:
\begin{itemize}
    \item if $\Delta$ is a cube in $K$, then the degree of $F/K$ is exactly $8$;
    \item if $\Delta$ is a cube in $K^{nr}$ but not in $K$, then $[L:K^{nr}]=8$ and $[F:K]=24$;
    \item if $\Delta$ is not a cube in $K^{nr}$, then $[L:K^{nr}]=[F:K]=24$.
\end{itemize}

Moreover the Galois closure of $F/K$ is given by $K(E[3])=F(\zeta_3)$, where $\zeta_3$ is a primitive $3$-rd root of unity; since if $\zeta_3 \notin K$ it generates a degree $2$ unramified extension, we have that $F/K$ is not Galois if and only if the inertia degree $n$ of $K$ over $\Q_2$ is odd. Note that this cannot occur if $\Delta$ is a cube in $K^{nr}$ but not in $K$, otherwise the extension $K(\sqrt[3]{\Delta},\zeta_3)$ would be unramified and not cyclic.

\end{section}

\begin{section}{Proof of the main theorem}\label{sec:proof}

We will use the same notation as in Section \ref{sec:good model}. Since $I$ is non-abelian, then the group $\Gal(L/K)$ is also non-abelian. By \cite[\S2 Lemma 1]{dokchitser2008root}, the representation $\rho$ factors as $\chi \otimes \psi$, where $\chi$ is the following character:
\begin{align}
    \chi : G_K &\rightarrow \overline{\Q}_\ell^\times\\
    \Frob_K & \mapsto \sqrt{-2}^n;\\
    I_K & \mapsto 1,
\end{align}
and $\psi : G_K \rightarrow \GL_2(\overline{\Q}_\ell)$ factors through the finite group  $G=\Gal(F(\zeta_3)/K)$, which is either $Q_8$ or $\SL_2(\F_3)$ if $n$ is even, $SD_{16}$ or $\GL_2(\F_3)$ if $n$ is odd. As a $G$-representation, $\psi$ is irreducible and faithful, and it is given by $\psi(g)=\dfrac{1}{\chi(g)} \rho(g)$. The definition of $\chi$ is suggested by the following lemma.

\begin{lemma}\label{eigenvalues}
Let $\Frob_F$ be the Arithmetic Frobenius of $F$; then the eigenvalues of $\rho(\Frob_F)$ are $(\pm \sqrt{-2})^{f_{F/\Q_2}}$; in particular these are real and equal if $f_{F/\Q_2}$ is even, complex conjugate if $f_{F/\Q_2}$ is odd.
\end{lemma}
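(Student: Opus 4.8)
The plan is to exploit the fact, established in Lemma~\ref{good model}, that $E$ has good reduction over $F$ with reduction $\tilde E_F : y^2+y=x^3$. Since $E$ acquires good reduction over $F$, the restriction $\rho|_{G_F}$ is unramified, and by the good-reduction case of the criterion of N\'eron--Ogg--Shafarevich (see \cite[VII \S 7]{silverman1986arithmetic}) the action of $\Frob_F$ on $T_\ell (E)\otimes\overline{\Q}_\ell$ agrees, via the reduction isomorphism $T_\ell(E)\cong T_\ell(\tilde E_F)$ (an isomorphism of $G_F$-modules because $\ell\neq 2$), with the action of the $q_F$-power Frobenius endomorphism of $\tilde E_F$, where $q_F=|k_F|$ is the size of the residue field of $F$. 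Hence the eigenvalues of $\rho(\Frob_F)$ are exactly the roots of the characteristic polynomial $T^2-a_{q_F}T+q_F$ of Frobenius on $\tilde E_F$, and it remains to compute these roots.

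Next I would carry out the point count for the reduced curve. Since the model $y^2+y=x^3$ already has coefficients in $\F_2$, the curve $\tilde E_F$ is the base change to $k_F$ of the supersingular curve $\tilde E:y^2+y=x^3$ over $\F_2$; writing $f=f_{F/\Q_2}$ we have $q_F=2^{f}$ and the $q_F$-power Frobenius is the $f$-th power of the $\F_2$-Frobenius. A direct count gives $\#\tilde E(\F_2)=3$, so $a_2=2+1-3=0$ and the eigenvalues $\alpha,\beta$ of the $\F_2$-Frobenius satisfy $\alpha+\beta=0$ and $\alpha\beta=2$, i.e. $\{\alpha,\beta\}=\{\sqrt{-2},-\sqrt{-2}\}$. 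Raising to the $f$-th power, the eigenvalues of $\rho(\Frob_F)$ are $(\sqrt{-2})^{f}$ and $(-\sqrt{-2})^{f}=(\pm\sqrt{-2})^{f}$, as claimed. As a consistency check, their product is $(-1)^{f}(-2)^{f}=2^{f}=q_F$, which matches $\det\rho(\Frob_F)=\chi_{\mathrm{cyc}}(\Frob_F)=q_F$ coming from the Weil pairing.

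Finally I would read off the real/complex behaviour from the parity of $f$, using the fixed embedding $\sqrt{-2}\mapsto i\sqrt{2}$. If $f$ is even then $(\pm\sqrt{-2})^{f}=(-2)^{f/2}\in\R$ and the two eigenvalues coincide; if $f$ is odd then $(\sqrt{-2})^{f}$ is a nonzero purely imaginary number and $(-\sqrt{-2})^{f}=-(\sqrt{-2})^{f}$ is its complex conjugate, so the eigenvalues are distinct complex conjugates. The step I expect to require the most care is the first one: pinning down that the \emph{arithmetic} Frobenius (rather than its inverse, the geometric Frobenius) acts as the $q_F$-power endomorphism, so that the eigenvalues are $(\pm\sqrt{-2})^{f}$ and not $(\pm\sqrt{-2})^{-f}$. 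The determinant computation above removes this ambiguity, since it forces the product of the eigenvalues to be $q_F$ rather than $q_F^{-1}$.
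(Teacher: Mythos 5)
Your proposal is correct and follows essentially the same route as the paper: good reduction over $F$ (Lemma \ref{good model}), point-counting on $y^2+y=x^3$ over $\F_2$ to get trace $0$ and characteristic polynomial $T^2+2$, then raising the roots $\pm\sqrt{-2}$ to the $f_{F/\Q_2}$-th power and reading off the parity dichotomy. Your added care about the reduction isomorphism and the arithmetic-versus-geometric Frobenius ambiguity (settled by the determinant/Weil-pairing check) is a sound refinement of the same argument, not a different one.
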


\begin{proof}
Suppose that $f_{F/\Q_2}=1$. By Lemma \ref{good model} we can compute the trace of $\Frob_F$ via point-counting on the reduced curve $y^2+y=x^3$, getting
\begin{align}
    \tr(\rho(\Frob_F))= |k| + 1 - |\Tilde{E_F}(k)|=2+1-3=0.
\end{align}
Then by \cite[V \S 2 Proposition 2.3]{silverman1986arithmetic}, the characteristic polynomial of $\rho(\Frob_F)$ is
\begin{align}
    T^2 + 2,
\end{align}
with roots $\pm \sqrt{-2}$.
For general $f_{F/\Q_2}$, the eigenvalues of $\rho(\Frob_F)$ are the $f_{F/\Q_2}$-th powers of the roots of the polynomial above, hence for odd $f_{F/\Q_2}$ we get $\sqrt{-2}^{f_{F/\Q_2}}$, $-\sqrt{-2}^{f_{F/\Q_2}}$, and for even $n$ there is only one double eigenvalue $(-2)^{f_{F/\Q_2}/2}$.
\end{proof}

We have that for even $n$, $F(\zeta_3)=F$ and so $\Frob_F$ is central in the group $\Gal(L/K)$, so it acts as a scalar matrix, with eigenvalue given by Lemma \ref{eigenvalues}. Moreover, for any $n$, if $\sqrt[3]{\Delta} \notin K^{nr} \setminus K$, then $F$ and $K$ have the same residue field and so $f_{F/\Q_2}=n$; in this case $\rho(\Frob_K)=\rho(\Frob_F)$. Otherwise, the unramified part of the extension $F/K$ is given by $\sqrt[3]{\Delta}$ and therefore has degree $3$, so $f_{F/\Q_2}=3n$. In particular $\rho(\Frob_F)=\rho(\Frob_K)^3$.

Suppose first that $n$ is even and that $\sqrt[3]{\Delta} \notin K^{nr} \setminus K$. Then we have the following.

\begin{theorem}\label{neven}
If $K$ is a $2$-adic field with even inertia degree $n$ over $\Q_2$, then Theorem \ref{mainthm} is true for any elliptic curve $E/K$ with potential good reduction such that the image of inertia under $\rho$ is non-abelian and $\sqrt[3]{\Delta} \notin K^{nr} \setminus K$.
\end{theorem}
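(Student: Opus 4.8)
The plan is to exploit the simplification that occurs for even $n$: since $\F_4\subseteq k$ we have $\zeta_3\in K$, so $F(\zeta_3)=F$ and $G=\Gal(F/K)$. First I would check that under the hypothesis $\sqrt[3]{\Delta}\notin K^{nr}\setminus K$ the extension $F/K$ is totally ramified. By the case analysis at the end of Section~\ref{sec:good model}, this hypothesis excludes precisely the intermediate case, leaving either $\Delta$ a cube in $K$ (so $[F:K]=8=[L:K^{nr}]$) or $\Delta$ not a cube in $K^{nr}$ (so $[F:K]=24=[L:K^{nr}]$); in both cases $[F:K]=[L:K^{nr}]$, whence $F\cap K^{nr}=K$ and $\Gal(F/K)\cong\Gal(F^{nr}/K^{nr})=\Gal(L/K^{nr})=I$. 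Thus $G=I$ is $Q_8$ or $\SL_2(\F_3)$, matching the two even-$n$ bullets of Theorem~\ref{mainthm} (the excluded subcase, where the inertia image is $Q_8$ but $G=\SL_2(\F_3)$, is the one that must be treated separately).

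Next I would pin down $\chi$ and reduce the problem to identifying a representation of $I$. Since $f_{F/\Q_2}=n$ is even, Lemma~\ref{eigenvalues} gives that $\rho(\Frob_F)=\rho(\Frob_K)$ has the single eigenvalue $(-2)^{n/2}=\sqrt{-2}^{\,n}$ with multiplicity two; as $\Frob_F$ is central in $\Gal(L/K)$ (because $F(\zeta_3)=F$) it acts as the scalar $\sqrt{-2}^{\,n}\,\mathrm{Id}$. This is exactly $\chi(\Frob_K)\,\mathrm{Id}$ for the $\chi$ of the statement, so $\psi(\Frob_K)=\chi(\Frob_K)^{-1}\rho(\Frob_K)=\mathrm{Id}$, i.e. $\psi$ kills the unramified quotient; combined with $\chi|_{I_K}=1$ this shows $\psi=\rho|_{I_K}$ as a representation of $G=I$, realizing the inclusion $I\hookrightarrow\GL_2(\overline{\Q}_\ell)$.

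It then remains to show that this faithful representation has the tabulated character. Since $I$ is finite and non-abelian it acts irreducibly (a finite non-abelian subgroup cannot be conjugated into the upper-triangular matrices, as finite subgroups of a Borel over a field of characteristic $0$ are diagonal, hence abelian), and the character on a finite-order element is determined by its eigenvalues, which are roots of unity whose product is $\det\psi$. The key point is that $\det\psi=1$: by the Weil pairing $\det\rho=\chi_{\mathrm{cyc}}$, and since $\ell\neq 2$ the extension $K(\mu_{\ell^\infty})/K$ is unramified, so $\chi_{\mathrm{cyc}}|_{I_K}=1$ and hence $\det\psi=\det\rho|_{I_K}=1$. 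With determinant one and faithfulness the eigenvalues on each class are forced (the central involution gives trace $-2$; an order-$3$ element has eigenvalues $\zeta_3,\zeta_3^{2}$ and trace $-1$; an order-$4$ element has eigenvalues $\pm i$ and trace $0$; an order-$6$ element has eigenvalues $\zeta_6^{\pm1}$ and trace $1$). For $Q_8$ this reproduces $2,-2,0,0,0$, and for $\SL_2(\F_3)$ it singles out, among the three faithful $2$-dimensional irreducibles (which differ by a cubic twist and have determinants $1,\omega,\omega^2$), the unique one with real character $2,-2,-1,-1,0,1,1$.

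The main obstacle I anticipate is this identification step for $\SL_2(\F_3)$: a priori $\psi$ could be any of the three inequivalent faithful $2$-dimensional representations, and their characters genuinely differ on the classes $3A,3B,6A,6B$. The determinant computation—tracing $\det\psi$ back to the cyclotomic character, which is unramified precisely because $\ell\neq 2$—is what resolves this cleanly, and it is the same feature that makes the even-$n$ case free of the $8A/8B$ sign ambiguity that complicates the odd-$n$ analysis. The rest is the routine verification that the forced eigenvalues reproduce the tables.
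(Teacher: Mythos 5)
Your proof is correct, and while its skeleton matches the paper's, the decisive step is genuinely different. The common part: you show that $G=\Gal(F/K)$ coincides with the inertia image $I\in\{Q_8,\SL_2(\F_3)\}$ (the hypothesis $\sqrt[3]{\Delta}\notin K^{nr}\setminus K$ ruling out the mixed case), and that $\rho(\Frob_K)$ is the scalar $(-2)^{n/2}=\chi(\Frob_K)$, so that $\psi=\rho|_{I_K}$; this is exactly how the paper's proof of Theorem \ref{neven} proceeds. The difference is how you identify \emph{which} faithful irreducible $2$-dimensional representation $\psi$ is. The paper appeals to the independence-of-$\ell$ theorem of Serre--Tate \cite[\S 2 Theorem 2.ii]{10.2307/1970722}: the traces of $\rho$ are rational integers, and since $\chi(\Frob_K)\in\Z$ when $n$ is even, the character of $\psi$ is integral, which by inspection of the character tables pins down a unique representation of $Q_8$ and of $\SL_2(\F_3)$. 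You instead compute $\det\psi=1$ from the Weil pairing ($\det\rho$ is the cyclotomic character, which is unramified because $\ell\neq 2$) and then force the eigenvalues on each conjugacy class using faithfulness; for $\SL_2(\F_3)$ this selects, among the three cubic twists, the same representation that the paper's integrality argument does, and your class-by-class eigenvalue computations (order $3$: $\zeta_3,\zeta_3^2$; order $4$: $\pm i$; order $6$: primitive sixth roots) are all correct. Both arguments are sound; yours is somewhat more self-contained (Weil pairing plus Maschke for irreducibility, no independence of $\ell$), determines the eigenvalues of every element directly rather than by table lookup, and makes transparent why the even-$n$ case is rigid while the odd-$n$ case is not: the two candidate representations of $SD_{16}$ (or $\GL_2(\F_3)$) have \emph{equal} determinant characters and are swapped by $\sqrt{-2}\mapsto-\sqrt{-2}$, so neither your determinant criterion nor the paper's rationality criterion can separate them, which is exactly why the explicit Frobenius computation of Theorem \ref{nodd} is needed there.
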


\begin{proof}
Since $n$ is even, $G$ is equal to its inertia subgroup since either $\sqrt[3]{\Delta} \in K$ or $\sqrt[3]{\Delta} \notin K^{nr}$. As noticed above, $\Frob_K$ acts as the multiplication by a scalar with eigenvalue $(-2)^{n/2}= \chi(\Frob_K)$, therefore $\psi$ is given by the representation $\rho$ restricted to inertia, hence it is a faithful, irreducible $2$-dimensional representation of $G$ (which is either $Q_8$ or $\SL_2(\F_3)$). Moreover by \cite[\S 2 Theorem 2.ii]{10.2307/1970722}, the character of this representation has values in $\Z$. By inspecting the character tables of $Q_8$ and $\SL_2(\F_3)$ on \cite{groupnames}, we deduce that each of these groups only has one such representation, the one given in the statement.
\end{proof}

%\begin{center}
%    \begin{tikzcd}[cells={nodes={draw, rounded rectangle}}]
%        &\overline{K} \arrow[dash]{d}&    \\
%        & F^{nr} \arrow[dash]{dl} \arrow[dash]{dr}[description]{\rho(I_K)}&   \\
%    F \arrow[dash]{dr}[description]{G \cong \rho(I_K)}   &   & K^{nr} \arrow[dash]{dl}[description]{\langle \Frob_K \rangle}\\
%        &K&
%    \end{tikzcd}
%\end{center}

For the case $\sqrt[3]{\Delta} \in K^{nr}\setminus K$, the image of inertia is strictly smaller than $\Gal(F/K)$, so the argument that the character values are in $\Z$ does not apply directly. However it is still possible to compute $\psi$, getting a result surprisingly similar to the one in Theorem \ref{neven}.

\begin{theorem}\label{unramified}
If $K$ is a $2$-adic field with even inertia degree over $\Q_2$ and $E$ is an elliptic curve with potential good reduction over $K$ such that the image of inertia under $\rho$ is non-abelian and $\sqrt[3]{\Delta} \in K^{nr} \setminus K$, then Theorem \ref{mainthm} holds for $E$.
\end{theorem}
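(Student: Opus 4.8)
The plan is to first pin down the group $G$ exactly, and then to single out $\psi$ among the few candidate representations by computing one invariant, its determinant. Since $n$ is even we have $\F_4\subseteq k$, so $\zeta_3\in K$ and hence $F(\zeta_3)=F$; thus $G=\Gal(F/K)$. As recorded in Section~\ref{sec:good model}, in the present case $[F:K]=24$, while the inertia subgroup $I=\Gal(F/K(\sqrt[3]{\Delta}))$ has index equal to the degree $[K(\sqrt[3]{\Delta}):K]=3$ of the unramified part, so $|I|=8$; being non-abelian, $I\cong Q_8$, which already yields the asserted description of the image of inertia. To see that $G\cong\SL_2(\F_3)$ I would use the mod-$3$ representation, which identifies $G=\Gal(K(E[3])/K)$ with a subgroup of $\GL_2(\F_3)$: its determinant is the mod-$3$ cyclotomic character, unramified with value $2^n\equiv1\pmod 3$ on $\Frob_K$ (because $n$ is even) and trivial on inertia, so the image lies in $\SL_2(\F_3)$, and a non-abelian subgroup of order $24$ of $\SL_2(\F_3)$ must be the whole group. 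Thus $G\cong\SL_2(\F_3)$ with $I=Q_8$ its Sylow $2$-subgroup.

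Next I would exploit that $\psi$ is an irreducible faithful $2$-dimensional representation of $\SL_2(\F_3)$, and that this group has exactly three such representations. They all restrict to the unique $2$-dimensional irreducible representation of the normal subgroup $Q_8=I$, and they differ by tensoring with the characters of $G/I\cong C_3$, so their determinants are pairwise distinct and exhaust the three characters of $G^{\mathrm{ab}}\cong C_3$. Hence $\psi$ is determined by $\det\psi$. To compute it, recall that the Weil pairing gives $\det\rho=\chi_{cyc}$, the unramified character with $\det\rho(\Frob_K)=2^n$; since $\det(\chi\otimes\psi)=\chi^{2}\cdot\det\psi$ and $\chi(\Frob_K)=\sqrt{-2}^{\,n}=(-2)^{n/2}$, I obtain
\begin{align}
\det\psi(\Frob_K)=\frac{\det\rho(\Frob_K)}{\chi(\Frob_K)^{2}}=\frac{2^n}{(-2)^n}=1,
\end{align}
using that $n$ is even. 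As both $\chi$ and $\det\rho$ are trivial on inertia, $\det\psi$ is trivial on $I$ too, and since $G$ is generated by $I$ together with a lift of $\Frob_K$, this forces $\det\psi\equiv1$. Therefore $\psi$ is the unique $2$-dimensional irreducible representation of $\SL_2(\F_3)$ with trivial determinant, and its character is then forced: on the classes inside $I$ (namely $1,2,4$) it takes the values $2,-2,0$ of the faithful character of $Q_8$, while on an element $g$ of order $3$ the eigenvalues of $\psi(g)$ are $\{\zeta_3,\overline{\zeta_3}\}$ with product $\det\psi(g)=1$, giving $\tr\psi(g)=-1$, and on the order-$6$ elements $-g$ one gets $\tr\psi(-g)=-\tr\psi(g)=1$ since $\psi(-1)=-I$. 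This matches the character of $\SL_2(\F_3)$ in Theorem~\ref{mainthm}.

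The genuinely delicate point is that restriction to inertia cannot distinguish $\psi$ from its two twists by the nontrivial characters of $C_3$, since all three agree on $Q_8$; this is precisely why the integrality argument of Theorem~\ref{neven}, which only controls the classes lying in $I$, does not suffice here. The determinant supplies the missing invariant, and the whole computation hinges on the parity of $n$: it is the evenness of $n$ that makes $\chi(\Frob_K)^{2}=(-2)^n$ equal to $\det\rho(\Frob_K)=2^n$ (the same fact as $2^n\equiv1\bmod 3$ at the mod-$3$ level), collapsing $\det\psi$ to the trivial character and thereby selecting the representation with rational character rather than one of its complex-conjugate twists. I expect no further obstacle: once $\det\psi=1$ is in hand the character values are elementary, and the value $\det\rho(\Frob_K)=2^n$ can be cross-checked against Lemma~\ref{eigenvalues} through $\Frob_F=\Frob_K^{3}$ and $f_{F/\Q_2}=3n$.
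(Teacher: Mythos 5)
Your proposal is correct, but it identifies $\psi$ by a genuinely different invariant than the paper does. The paper's proof computes the \emph{trace} of $\psi(\Frob_K)$: since the inertia degree of $F/K$ is $3$, one has $\rho(\Frob_F)=\rho(\Frob_K)^3$, and $\rho(\Frob_F)$ is the scalar $(-2)^{3n/2}$ by Lemma \ref{eigenvalues} (point counting on $y^2+y=x^3$); hence $\psi(\Frob_K)^3=\mathrm{id}$, and faithfulness together with $\det\psi(\Frob_K)=1$ forces the eigenvalues to be the two primitive cube roots of unity, so $\tr\psi(\Frob_K)=-1$, which combined with the integrality-on-inertia argument of Theorem \ref{neven} pins down the character. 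You instead compute the \emph{determinant} of $\psi$ from the Weil pairing, $\det\rho=\chi_{cyc}$ (unramified, $\Frob_K\mapsto 2^n$), obtaining $\det\psi(\Frob_K)=2^n/(-2)^n=1$, and you observe that the three faithful $2$-dimensional irreducibles of $\SL_2(\F_3)$ are twists of one another by the characters of $G^{\mathrm{ab}}\cong C_3$, hence separated by their determinants. Each route buys something. The paper's stays inside the machinery it has already built (Lemma \ref{eigenvalues} and the tensor decomposition). Yours imports one standard external fact, $\det\rho=\chi_{cyc}$, never invoked in the paper, but in exchange it justifies the equality $\det\psi(\Frob_K)=1$ completely: in the paper this equality is asserted by ``computing the determinant on both sides'' of the cube relation, which by itself yields only $(\det\psi(\Frob_K))^3=1$ --- no information, since the eigenvalues are already known to be cube roots of unity --- so your Weil-pairing computation is precisely what is needed to make that step airtight. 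Two small caveats: your step ``non-abelian of order $8$, hence $Q_8$'' implicitly uses Kraus's classification quoted in the introduction (or the embedding of inertia into $\Aut(\tilde E_F)\cong\SL_2(\F_3)$) to exclude the dihedral group of order $8$; and note that the determinant trick is special to even $n$, since for odd $n$ the two candidate representations of $SD_{16}$ (or $\GL_2(\F_3)$) differ by a quadratic twist and therefore have \emph{equal} determinants --- which is exactly why the paper's Theorem \ref{nodd} must resort to an explicit trace congruence there.
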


\begin{proof}
The difference between $G$ and its inertia subgroup is determined by $\Frob_K$. We will show that the trace of $\psi(\Frob_K)$ is integer and so the result will follow from the proof of Theorem \ref{neven}.

Recall that $\chi$ is the unramified character given by $\chi(\Frob_K)=(-2)^{n/2}$; then since the inertia degree of $F/K$ is $3$ we have $\rho(\Frob_F)=\rho(\Frob_K)^3$, therefore using the relation $\rho= \chi \otimes \psi$ and the fact that $\rho(\Frob_F)$ is a scalar:
\begin{align}
    (-2)^{3n/2} id = ((-2)^{n/2} \psi(\Frob_K))^3;
\end{align}
so the eigenvalues of $\psi(\Frob_K)$ are $3$-rd roots of unity (not necessarily primitive) in $\overline{\Q}_\ell$; moreover the order of $\psi(\Frob_K)$ is exactly $3$ since $\psi$ is faithful as a representation of $G$, so not both the eigenvalues can be $1$. Computing the determinant on both sides we obtain that $\det (\psi (\Frob_K))=1$, therefore the eigenvalues of $\psi(\Frob_K)$ can only be the two distinct primitive $3$-rd roots of unity, with trace $-1$. Hence the representation $\psi$ of $\SL_2(\F_3)$ is the one given in the statement.
\end{proof}

%\begin{center}
%    \begin{tikzcd}[cells={nodes={draw, rounded rectangle}}]
%  &      &\overline{K} \arrow[dash]{d}&        \\
%  &      &F^{nr} \arrow[dash]{dl}[description]{\langle \Frob_F \rangle} \arrow[dash]{dr}[description]{Q_8}  &     \\
%   &   F \arrow[dash]{dd}[description]{G \cong \SL_2(\F_3)} \arrow[dash]{dl} \arrow[dash]{dr}                         &   & K^{nr} \arrow[dash]{dl}[description]{\langle \Frob_K^3 \rangle} \arrow[dash, bend left]{ddll}[description]{\langle \Frob_K \rangle}\\\
%    K(x_P,y_P) \arrow[dash]{dr}[description]{Q_8}& & K(\sqrt[3]{\Delta}) \arrow[dash]{dl}[description]{C_3}&  \\
%    &K& & 
%    \end{tikzcd}
%\end{center}

From this moment on, we assume that $n$ is odd or equivalently that $F/K$ is not Galois. Then $\psi$ is an irreducible faithful representation of dimension $2$ of $G$, which is either $SD_{16}$ if $\Delta$ is a cube in $K$, or $\GL_2(\F_3)$ otherwise. Again by looking at the character tables of these two groups on \cite{groupnames}, we obtain two possible such representations, both of which extend the representation of inertia described in the proof of Theorem \ref{neven}. These two representations only differ for the character value on the elements of order $8$. So we need a more explicit description of the action of this group to deduce which one is the correct representation. Note that we will only concentrate on the wild subgroup of $G$, so we may assume for simplicity that the whole group is $SD_{16}$. If $G=\GL_2(\F_3)$ the wild subgroup does not change, since this Galois group differs from the previous one by a cubic totally ramified (hence tame) field extension, and the parity of $n$ is not affected.

First, we need to describe explicitly this wild group. Recall that if $\tilde E_F$ is the reduced curve of the good model for $E$ over $F$ then there is an injection of the image of inertia into $\Aut(\tilde E_F)$, that is $\SL_2(\F_3)$. This injection is obtained as follows: fix an element $\sigma$ of inertia, and a point $(\tilde x, \tilde y)$ on the reduced curve, then lift it to a point $(x,y)$ of $E_F$, which has coordinates in $F$, apply $\sigma$ to each coordinate, and then reduce to another point which again lies on $\tilde E_F$. The group $G$ contains a copy of the image of inertia and an extra element $\phi$ of order $2$; applying the same construction, we see that $\phi$ acts as Frobenius on the reduced curve. Now fix $\ell =3$ and consider the representation $\overline{\rho}$ which is the $3$-adic representation modulo $3$. This is the Galois representation induced by $\rho$ on $E[3]$; after fixing a basis $\{P,Q\}$ for $E[3]$ as a $\F_3$-vector space, $\overline{\rho}$ takes values in $\GL_2(\F_3)$. In \cite[\S 4 Figure 4.2]{Robson2017} there is a visual interpretation of this action. Note that the two representations described above are identical, since they are both induced by the action of the Galois group on elements of $F(\zeta_3)$. We will use both interpretations to find the character of the generators of the group $G$ under $\psi$.

%\begin{center}
%    \begin{tikzcd}[column sep=large,cells={nodes={draw, rounded rectangle}}]
%        &\overline{K} \arrow[dash]{d}&    \\
%        &F^{nr} \arrow[dash]{dl} \arrow[dash]{dr}[description]{\rho(I_K)}&    \\
%    F(\zeta_3) \arrow[dash]{d} \arrow[dash]{dr}  \arrow[dash]{ddr}[description]{G} &   & K^{nr} \arrow[dash]{dl}[description]{\langle \Frob_K^2 \rangle} \arrow[dash, bend left]{ddl}[description]{\langle \Frob_K \rangle}\\    
%    F \arrow[dash]{dr}   &K(\zeta_3) \arrow[dash]{d}&    \\
%        &K&
%    \end{tikzcd}
%\end{center}

\begin{lemma}\label{frobmod3}
There exists a basis $\{P,Q\}$ of $E[3]$ where the matrix representing the image of Frobenius modulo $3$ is $\mat{1}{0}{0}{2}$.
\end{lemma}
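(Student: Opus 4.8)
The plan is to identify the Frobenius action on $E[3]$ with a linear-algebraic object over the reduced curve, and then diagonalize it. We know from the discussion above that the extra element $\phi \in G$ acts on the reduced curve $\tilde E_F : y^2 + y = x^3$ as the Frobenius endomorphism $x \mapsto x^2$, $y \mapsto y^2$ (since $f_{F/\Q_2}=1$ in the relevant wild subgroup, the residue field is $\F_2$). The key point is that this geometric Frobenius is an endomorphism of $\tilde E_F$, and by Lemma \ref{eigenvalues} its action on the Tate module has characteristic polynomial $T^2+2$. First I would reduce the problem to computing the characteristic polynomial of Frobenius acting on the $3$-torsion $\tilde E_F[3]$, which is the reduction of $E[3]$, and observe that this characteristic polynomial is $T^2+2 \pmod 3 = T^2 + 2 = T^2 - 1 = (T-1)(T-2) \pmod 3$.

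The main computation is therefore to check that Frobenius has the two \emph{distinct} eigenvalues $1$ and $2$ in $\F_3$. Since the characteristic polynomial $(T-1)(T-2)$ has distinct roots in $\F_3$, Frobenius is diagonalizable over $\F_3$, and there exists a basis $\{P,Q\}$ of $E[3] \cong \tilde E_F[3]$ consisting of eigenvectors with eigenvalues $1$ and $2$ respectively. In this basis the matrix is exactly $\mat{1}{0}{0}{2}$, which is the claim. I would phrase the identification of $E[3]$ with $\tilde E_F[3]$ via the reduction map, which is an isomorphism of $\F_3$-vector spaces compatible with the Galois/Frobenius action because $\ell = 3 \neq 2$ and $E$ has good reduction over $F$ (so reduction is injective on prime-to-$p$ torsion by the criterion of N\'eron--Ogg--Shafarevich, already invoked in the excerpt).

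The one subtle point I would verify carefully is the reduction of the characteristic polynomial modulo $3$: the trace of Frobenius is $0$ and the determinant is $|k| = 2$, so the characteristic polynomial over $\Z$ is $T^2 - 0\cdot T + 2 = T^2+2$, and modulo $3$ this factors as $(T-1)(T-2)$ since $1 + 2 = 0$ and $1 \cdot 2 = 2 \equiv 2 \pmod 3$. This is the only arithmetic input, and it is exactly the point-count already carried out in the proof of Lemma \ref{eigenvalues}. I expect the main (minor) obstacle to be bookkeeping: ensuring that the Frobenius whose matrix we compute is the same element $\phi$ that generates the relevant class, rather than its inverse or a conjugate, and that diagonalizability genuinely follows from the distinctness of eigenvalues in $\F_3$ (which it does, since the two eigenvalues $1,2$ are distinct and each eigenspace is $1$-dimensional). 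With the eigenbasis in hand, the stated matrix follows immediately, and this basis will then serve as the fixed basis $\{P,Q\}$ used in the subsequent explicit description of $\psi$ on the order-$8$ elements.
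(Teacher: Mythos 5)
Your proposal is correct in substance, and it takes a genuinely different route from the paper. The paper's proof is explicit and geometric: using that $F/K$ is not Galois (equivalently $n$ odd), it shows that $\pm P$ are the only $3$-torsion points with coordinates in $F$, lists the eight points of exact order $3$ on the reduced curve $y^2+y=x^3$, takes $P$ reducing to $(0,0)$ and $Q$ reducing to $(1,\zzeta)$, and reads off that Frobenius fixes $P$ and sends $Q$ to $-Q$. Your proof replaces all of this by linear algebra: the mod-$3$ characteristic polynomial of Frobenius factors as $(T-1)(T-2)$ with distinct roots in $\F_3$, so Frobenius is diagonalizable and any eigenbasis gives the matrix $\mat{1}{0}{0}{2}$. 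This shortcut is legitimate for the intended use: the proof of Theorem \ref{nodd} only needs \emph{some} basis in which $\phi$ has this matrix, since the inertia element $\sigma$ there is afterwards chosen by prescribing its matrix in that basis (using that every determinant-one element of $2$-power order lies in the image of inertia). What the paper's construction buys in exchange is an explicitly realized basis (useful for the implementation), and a proof that the eigenvector with eigenvalue $1$ can be taken to be the distinguished $F$-rational point $P$ of Lemma \ref{good model}.

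There is, however, one error you must repair: the parenthetical claim that $f_{F/\Q_2}=1$, so that the residue field of $F$ is $\F_2$ and the characteristic polynomial of $\rho(\Frob_F)$ is $T^2+2$. In the cases at hand ($n$ odd, so either $\sqrt[3]{\Delta}\in K$ or $\sqrt[3]{\Delta}\notin K^{nr}$), the extension $F/K$ has trivial residue extension, hence $f_{F/\Q_2}=n$, which is odd but need not equal $1$; the residue field is $\F_{2^n}$ and, by Lemma \ref{eigenvalues}, the characteristic polynomial of $\rho(\Frob_F)$ is
\begin{align}
\bigl(T-\sqrt{-2}^{\,n}\bigr)\bigl(T+\sqrt{-2}^{\,n}\bigr)=T^2+2^n ,
\end{align}
not $T^2+2$. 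Your argument survives precisely because $n$ is odd: then $2^n\equiv 2 \pmod 3$, so the mod-$3$ characteristic polynomial is still $T^2-1=(T-1)(T-2)$. This is not a cosmetic point, since the oddness of $n$ is exactly what makes the lemma true: for even $n$ one has $F(\zeta_3)=F$, the element $\rho(\Frob_F)$ is the scalar $(-2)^{n/2}$, and modulo $3$ it is the identity matrix, so no basis of the required form exists. The fix is simply to invoke the standing hypothesis that $n$ is odd at this step (trace $0$, determinant $2^n\equiv 2 \pmod 3$), instead of the incorrect claim $f_{F/\Q_2}=1$. Finally, your worry about $\phi$ versus its inverse or a conjugate is vacuous here: $\mat{1}{0}{0}{2}$ is an involution and the statement of the lemma is invariant under conjugation, so no bookkeeping is needed.
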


\begin{proof}
Let $P$ be as in the proof of Lemma \ref{good model}. Then $P$ and $-P$ are the only points of exact order $3$ with coordinates in $F$. Otherwise if $Q=(x_Q,y_Q)$ is another point of order $3$ and $x_Q,y_Q \in F$, then the coordinates of every other point of order $3$ would be rational functions with rational coefficients of $x_P,y_P,x_Q,y_Q$ since $E[3]=\{ O, \pm P, \pm Q, \pm P \pm Q \}$, hence these coordinates would be in $F$, thus $F/K$ would be Galois, contradiction.

We know that the good model for $E_F$ reduces to $y^2+y=x^3$, and by direct computation this curve has the following $8$ points of exact order $3$:
\begin{align}
     (0,0), (0,1), (\zzeta,\zzeta),(\zzeta,\zzeta^2), (1,\zzeta),(1,\zzeta^2) \text{ and } (\zzeta^2,\zzeta),(\zzeta^2,\zzeta^2), 
\end{align}
where $\zzeta$ is a third root of unity in $\overline{K}$.

After applying the change of coordinates described in Lemma \ref{good model}, $P$ reduces to $(0,0)$ and $-P$ to $(0,1)$. Let $Q$ be the $3$-torsion point of $E(F(\zeta_3))$ reducing to $(1,\zzeta)$. Then under $\overline{\rho}$, Frobenius acts trivially on $P$ and maps $Q$ to $-Q$, that is to the only point that has the same abscissa of $Q$, which is in $F$. Therefore if we complete $P$ to the basis $\{P,Q\}$ of $E[3]$ with $Q$ as above, the matrix expressing the Frobenius in this basis is $\mat{1}{0}{0}{2}$, as claimed.
\end{proof}

\begin{theorem}\label{nodd}
If $K$ is a $2$-adic field with odd inertia degree $n$ over $\Q_2$, then Theorem \ref{mainthm} is true for any elliptic curve $E/K$ with potential good reduction such that the image of inertia under $\rho$ is non-abelian.
\end{theorem}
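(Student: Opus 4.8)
The plan is to pin down the sign of the character of $\psi$ on the order-$8$ classes, which by the discussion preceding Lemma \ref{frobmod3} is the only ambiguity left: $\psi$ is one of two representations that agree everywhere except that the values $\sqrt{-2},-\sqrt{-2}$ on the classes $8A,8B$ are interchanged. First I would reduce to the group $SD_{16}$. When $G=\GL_2(\F_3)$ the subgroup $SD_{16}$ is a $2$-Sylow containing all elements of order $8$, and restricting $\psi$ to it changes neither the values on $8A,8B$ nor the problem; this is exactly the reduction already indicated in the text, since the extra cubic totally ramified (hence tame) extension does not touch the wild inertia nor the parity of $n$. So from now on $G=SD_{16}$, with wild inertia $Q_8$ and the Frobenius direction accounted for by the order-$2$ element $\phi$.

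Next I would make the element $\phi\sigma$ explicit on $E[3]$. By Lemma \ref{frobmod3} there is a basis $\{P,Q\}$ in which $\phi$ acts as $\mat{1}{0}{0}{2}$. For $\sigma$ I would take the generator of wild inertia, i.e. the order-$4$ automorphism of the reduced curve $\tilde E_F : y^2+y=x^3$ lying in the $2$-Sylow $Q_8\subset\SL_2(\F_3)=\Aut(\tilde E_F)$, and write its matrix in the same basis using the injection of inertia into $\Aut(\tilde E_F)$ recalled before Lemma \ref{frobmod3}. I would then check by direct multiplication in $\GL_2(\F_3)$ that $\phi\sigma$ has order $8$ (equivalently that $(\phi\sigma)^4$ is the central involution of $Q_8$, acting as $-\mathrm{id}$), so that $\phi\sigma$ lies in one of the order-$8$ classes; this is the promised generator of $8A$, and its $\overline\rho$-trace is a definite element of $\F_3$.

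The core of the argument is then the sign computation, carried out through the factorisation $\rho=\chi\otimes\psi$. I would choose a Galois lift $g=\Frob_K\tau\in G_K$ of $\phi\sigma$, with $\tau\in I_K$ acting as $\sigma$ on $E[3]$; since $\chi$ is unramified and $\chi(\Frob_K)=\sqrt{-2}^{\,n}$, one gets
\begin{align}
\tr\rho(g)=\chi(g)\,\tr\psi(\phi\sigma)=\sqrt{-2}^{\,n}\cdot(\pm\sqrt{-2})=\pm(-2)^{(n+1)/2},
\end{align}
which is a rational integer because $n$ is odd. Reducing modulo $3$ and using $-2\equiv1$ gives $\tr\rho(g)\equiv\pm1\pmod 3$, while the same trace equals $\tr\overline\rho(\phi\sigma)$ computed directly from the explicit matrix above. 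Matching the two determines the sign, and I would verify that the explicit $\F_3$-trace equals $+1$, forcing $\tr\psi(\phi\sigma)=+\sqrt{-2}$; hence $\psi$ is the representation listed in the statement and $\phi\sigma$ genuinely represents the class $8A$. The cases $SD_{16}$ and $\GL_2(\F_3)$ are then identical on the wild part, so the theorem follows in both odd-$n$ cases.

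The step I expect to be the real obstacle is making the wild generator $\sigma$ genuinely explicit and canonical: one must read off the correct order-$4$ automorphism of $y^2+y=x^3$ in the basis $\{P,Q\}$ of Lemma \ref{frobmod3} (as opposed to its inverse, which would land in $8B$ with trace $-\sqrt{-2}$), and confirm that the resulting $\F_3$-trace of $\phi\sigma$ is $+1$ uniformly for every such curve. This is precisely where the fixed embedding $\sqrt{-2}=i\sqrt2$ and the \cite{groupnames} convention for $8A$ versus $8B$ must be reconciled with the actual action on $3$-torsion; once $\sigma$ is nailed down, the integrality and mod-$3$ bookkeeping is routine.
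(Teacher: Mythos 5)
Your overall strategy coincides with the paper's: factor $\rho=\chi\otimes\psi$, reduce the question to the wild subgroup $SD_{16}$, and pin down the sign on the order-$8$ classes by computing the trace of (a Frobenius lift times an order-$4$ inertia element) modulo $3$, using that the two candidate values $\pm(-2)^{(n+1)/2}$ are $\equiv \pm 1 \pmod 3$ and hence distinguishable. The use of Lemma \ref{frobmod3}, the congruence bookkeeping, and the reduction from $\GL_2(\F_3)$ to $SD_{16}$ all match the paper's proof of Theorem \ref{nodd}.

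However, there is a genuine gap at exactly the point you defer to the end, and your plan for closing it would not work. You propose to take $\sigma$ to be ``the'' order-$4$ generator of wild inertia, read off its matrix, and ``confirm that the resulting $\F_3$-trace of $\phi\sigma$ is $+1$ uniformly for every such curve.'' No such canonical $\sigma$ exists: $Q_8$ has six elements of order $4$, and in the basis of Lemma \ref{frobmod3} they behave differently. For $\sigma=\mat{0}{1}{2}{0}$ the product $\phi\sigma=\mat{0}{1}{1}{0}$ has order $2$, not $8$; for $\sigma=\mat{2}{1}{1}{1}$ the product $\mat{2}{1}{2}{2}$ has order $8$ and trace $1$; for its inverse $\mat{1}{2}{2}{2}$ the product has order $8$ and trace $2$. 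So the ``uniform verification'' you hope for is impossible in your framing: the answer genuinely depends on which $\sigma$ is taken. The idea you are missing is the paper's resolution of this: since the image of inertia under $\overline{\rho}$ contains \emph{every} determinant-$1$ matrix of $2$-power order (there is only one copy of $Q_8$ inside $\GL_2(\F_3)$, namely the $2$-Sylow of $\SL_2(\F_3)$), one is free to \emph{choose} $\sigma$ to be an inertia element whose matrix is exactly $\mat{2}{1}{1}{1}$, and the class $8A$ is then \emph{defined} as the class of this explicit $\phi\sigma$ --- that is what the sentence following the tables in Theorem \ref{mainthm} means. There is no external \cite{groupnames} convention to ``reconcile'': the two order-$8$ classes of $SD_{16}$ (and of $\GL_2(\F_3)$) are exchanged by an outer automorphism, so the labels $8A$, $8B$ only acquire meaning through this explicit generator. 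Once $\sigma$ is fixed this way, $\overline{\rho}(\Frob_K\sigma)=\mat{2}{1}{2}{2}$ has trace $1\in\F_3$, your congruence argument correctly forces $\tr\psi(\phi\sigma)=+\sqrt{-2}$, and your worry about $\sigma$ versus $\sigma^{-1}$ dissolves: choosing $\sigma^{-1}$ instead would simply exhibit a generator of $8B$, not a contradiction. (A small side error: a single $2$-Sylow of $\GL_2(\F_3)$ does not contain all twelve order-$8$ elements, only four of them; this is harmless for the argument, since each order-$8$ class meets the chosen $SD_{16}$.)
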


\begin{proof}
We will denote by $b$ the matrix $\overline{\rho}(\phi) = \mat{1}{0}{0}{2} \in \GL_2(\F_3)$. Now let us choose an element $\sigma$ in the inertia subgroup, of order $4$, for example
\begin{align}
    P & \mapsto Q-P\\
    Q & \mapsto P+Q.
\end{align}

It exists since $Q_8$ is contained in the image of inertia under $\overline{\rho}$, therefore every element of $\GL_2(\F_3)$ with determinant $1$ and $2$-power order is in the image of inertia. Then $\overline{\rho}(\sigma) $ is given by the matrix $\mat{2}{1}{1}{1}$. The element $\phi \sigma$ is an element of order $8$ of the group $G$ and so if we determine $\tr \psi (\phi \sigma)$, we determine the irreducible representation $\psi$. To compute this trace, we look at the trace of $\rho(\Frob_K \sigma)$. Let $a$ be the reduction of $\rho(\Frob_K \sigma )$ modulo $3$. Then $a=\mat{2}{1}{2}{2}$, with trace $1$. This means that $\tr (\rho (\Frob_K \sigma)) \equiv 1 \pmod 3$. Note that $a,b$, with the relations $a^8 = b^2 =1, bab=a^3$ generate $SD_{16}$ as a subgroup of $\GL_2(\F_3)$ (see the presentation of $SD_{16}$ in \cite{groupnames}).

By looking at the character table of the group $SD_{16}$ in \cite{groupnames}, we deduce that $\tr \psi(\phi \sigma)$ is either $ \sqrt{-2}$ or $-\sqrt{-2}$, so
\begin{align}
\tr \rho( \Frob_K \sigma )= \chi(\Frob_K) \tr (\psi( \phi \sigma)) \in \{\sqrt{-2}^n \cdot (\pm \sqrt{-2})\}.
\end{align}

Only one of this two numbers is congruent to $1$ modulo $3$, namely the one we obtain if $\tr \psi (\phi \sigma )=+ \sqrt{-2}$. Therefore we have the following character for $\psi$ (note that only the generators of the conjugacy classes of elements outside inertia, which identify the correct representation, are explicitly written).
$$
\begin{array}{c|rrrrrrr}
  \rm class&\rm1&\rm2A&\rm2B&\rm4A&\rm4B&\rm8A&\rm8B\cr
  \rm size&1&1&4&2&4&2&2\cr
  \rm generator& & &\phi& & &\phi \sigma&\phi \sigma^{-1}\cr
\hline
  \psi&2&-2&0&0&0&\sqrt{-2}&-\sqrt{-2}\cr
\end{array}
$$

Similarly if the inertia image is $\SL_2(\F_3)$, we get the following character for $\psi$:
$$
\begin{array}{c|rrrrrrrr}
  \rm class&\rm1&\rm2A&\rm2B&\rm3&\rm4&\rm6&\rm8A&\rm8B\cr
  \rm size&1&1&12&8&6&8&6&6\cr
  \rm generator&&&\phi&&&&\phi \sigma&\phi \sigma^{-1}\cr
\hline
  \psi&2&-2&0&-1&0&1&\sqrt{-2}&-\sqrt{-2}\cr
\end{array}
$$
as stated.
\end{proof}
\end{section}

\begin{section}{Notes on the implementation}
As explained in \cite{1812.05651}, the representation described here is the dual of the representation on the \'etale cohomology of $E$. In particular the function GaloisRepresentation implemented in MAGMA computes the Galois representation on the \'etale cohomology. Concretely, the two only differ by the character value of $\psi$ on the elements of the two conjugacy classes $8A$ and $8B$. The function is currently being improved implementing the result presented here.

Theorem \ref{mainthm} and Theorems 3.1 and 3.2 of \cite{1812.05651} give a method to describe completely the $\ell$-adic Galois representation of an elliptic curve with potential good reduction and non-abelian inertia action.
\end{section}

\section*{Acknowledgments}

The author thanks her supervisor Tim Dokchitser for the useful conversations and corrections. This work was supported by EPSRC.

%\section*{References}

\end{document}